\newcommand{\dist}{\operatorname{dist}}
\newcommand{\N}{\mathbb{N}}
\newcommand{\R}{\mathbb{R}}
\newcommand{\e}{e}
\newcommand{\D}{\,\mathrm{d}}
\theoremstyle{cupthm}
\newtheorem{thm}{Theorem}[section]
\newtheorem{lemma}[thm]{Lemma}
\theoremstyle{cupdefn}
\theoremstyle{cuprem}
\newtheorem{rem}[thm]{Remark}
\numberwithin{equation}{section}
\begin{document}
\runningtitle{Off-diagonal estimates for the Ornstein--Uhlenbeck semigroup}
\title{$L^p$-$L^q$ off-diagonal estimates for the Ornstein--Uhlenbeck semigroup: some positive and negative results}
\cauthor
\author[1]{Alex Amenta}
\address[1]{Delft Institute of Applied Mathematics, Delft University of Technology, P.O. Box
5031, 2628 CD Delft, The Netherlands\email{amenta@fastmail.fm}}
\author[2]{Jonas Teuwen}
\address[2]{Division of Radiation Oncology, Netherlands Cancer Institute/Antoni van Leeuwenhoek, Plesmanlaan 121,
1066 CX Amsterdam, The Netherlands \\ \\ Department of Imaging Physics, Optics Research Group, Delft University of Technology\email{jonasteuwen@gmail.com}}

\authorheadline{A. Amenta and J. Teuwen}

\support{The first author acknowledges financial support from the Australian Research Council Discovery Grant DP120103692 and the ANR project ``Harmonic analysis at its boundaries'' ANR-12-BS01-0013. The second author acknowledges partial financial support from the Netherlands Organisation for Scientific Research (NWO) by the NWO-VICI grant 639.033.604.}

\begin{abstract}
We investigate $L^p(\gamma)$--$L^q(\gamma)$ off-diagonal estimates for the Ornstein--Uhlenbeck semigroup $(e^{tL})_{t > 0}$.
For sufficiently large $t$ (quantified in terms of $p$ and $q$) these estimates hold in an unrestricted sense, while for sufficiently small $t$ they fail when restricted to maximal admissible balls and sufficiently small annuli.
Our counterexample uses Mehler kernel estimates.
\end{abstract}

\classification{primary 47D06; secondary 43A99}
\keywords{Ornstein--Uhlenbeck semigroup, off-diagonal estimates, Mehler kernel}

\maketitle

\section{Introduction}
Consider the Gaussian measure
\begin{equation}
	\label{eq:Gaussian-measure}
	d\gamma(x) := \pi^{-n/2} e^{-|x|^2} \, dx
\end{equation}
on the Euclidean space $\R^n$, where $n \geq 1$.
Naturally associated with this measure space is the Ornstein--Uhlenbeck operator
\begin{equation*}
	L := \frac 12 \Delta - \langle x, \nabla \rangle = -\frac 12 \nabla^* \nabla,
\end{equation*}
where $\nabla^*$ is the adjoint of the gradient operator $\nabla$ with respect to the Gaussian measure.
This operator generates a heat semigroup $(\e^{tL})_{t > 0}$ on $L^2(\gamma) = L^2(\R^n,\gamma)$, called the Ornstein--Uhlenbeck semigroup, with an explicit kernel: for all $u \in L^2(\gamma)$ and all $x \in \R^n$ we have
\begin{equation*}
	\e^{tL} u(x) = \int_{\R^n} M_t(x,y) u(y) \, d\gamma(y),
\end{equation*}
where 
\begin{equation}\label{eq:mehler-kernel}
	M_t(x,y) = \frac1{(1 - \e^{-2t})^{n/2}} \exp\biggl(-\e^{-t}\frac{|x-y|^2}{1 - \e^{-2t}} \biggr) \exp\biggl(2 \e^{-t}\frac{\langle x,y \rangle}{1 + \e^{-t}}\biggr)
\end{equation}
is the Mehler kernel.
If we equip $\R^n$ with the Euclidean distance and the Gaussian measure, and if we consider operators associated with the Ornstein--Uhlenbeck operator, we find ourselves within the realm of \emph{Gaussian harmonic analysis}: here, the Ornstein--Uhlenbeck operator takes the place of the Laplace operator $\Delta$.\footnote{The multiplicative factor $1/2$, which is not present in the usual definition of the Laplacian, arises naturally from the probabilistic interpretation of the Ornstein--Uhlenbeck operator.}
For a deeper introduction to Gaussian harmonic analysis see the review of Sj\"ogren \cite{pS97} and the introduction of \cite{jT15}.

In this article we investigate whether the Ornstein--Uhlenbeck semigroup satisfies \emph{$L^p(\gamma)$--$L^q(\gamma)$ off-diagonal estimates}: that is, estimates of (or similar to) the form
\begin{equation}
	\label{eq:off-diagonal-1}
  \biggl(\int_{F} |e^{tL} \mathbf{1}_E f|^q \D\gamma \biggr)^{1/q}
  \lesssim t^{-\theta} \exp\Bigl(-c \frac{\dist(E, F)^2}{t} \Bigr)
  \biggl(\int_E |f|^p \D\gamma \biggr)^{1/p},
\end{equation}
for some parameters $c > 0$ and $\theta \geq 0$, where $1 \leq p < q \leq \infty$, $f \in L^p(\gamma)$, and for some class of \emph{testing sets} $E,F \subset X$.
Often such estimates hold whenever $E$ and $F$ are Borel, but in applications we generally only need $E$ to be a ball and $F$ to be an annulus associated with $E$.
Such estimates serve as a replacement for pointwise kernel estimates in the harmonic analysis of operators whose heat semigroups have rough kernels, or no kernels at all, most notably in the solution to the Kato square root problem \cite{AHLMT02} (see also \cite{AKM06}).
Even though the Ornstein--Uhlenbeck semigroup has a smooth kernel, it would be useful to show that it satisfies some form of off-diagonal estimates, as this would suggest potential generalisation to perturbations of the Ornstein--Uhlenbeck operator, whose heat semigroups need not have nice kernels.

Various notions of off-diagonal estimates, including \eqref{eq:off-diagonal-1}, have been considered by Auscher and Martell \cite{AM07}.
However, they only consider doubling metric measure spaces, ruling out the non-doubling Gaussian measure.
Mauceri and Meda \cite{MM07} observed that $\gamma$ is doubling when restricted to \emph{admissible balls}, in the sense that $\gamma(B(x,2r)) \lesssim \gamma(B(x,r))$ when $r \leq \min(1, |x|^{-1})$.
Therefore it is reasonable to expect that the Ornstein--Uhlenbeck semigroup may satisfy some form of $L^p(\gamma)$--$L^q(\gamma)$ off-diagonal estimates if we restrict the testing sets $E,F$ to admissible balls and sufficiently small  annuli.

Here we demonstrate both the success and failure of off-diagonal estimates of the form \eqref{eq:off-diagonal-1}, as a first step in the search for the `right' off-diagonal estimates.
First we give a simple positive result (Theorem \ref{thm:posresult}): for $p \in (1,2)$, and for $t$ sufficiently large (depending on $p$), \eqref{eq:off-diagonal-1} is satisfied for all Borel $E,F \subset \R^n$.
This is proven by interpolating between $L^2(\gamma)$--$L^2(\gamma)$ Davies--Gaffney-type estimates and Nelson's $L^p(\gamma)$--$L^2(\gamma)$ hypercontractivity.
We follow with a negative result (Theorem \ref{thm:mainneg}): for $1 \leq p < q < \infty$ and for $t$ sufficiently small (again depending on $p$ and $q$), \eqref{eq:off-diagonal-1} fails when $E$ is a `maximal' admissible ball $B(c_B,|c_B|^{-1})$ and when $F$ is a sufficiently small annulus $C_k(B)$, in the sense that the implicit constant in \eqref{eq:off-diagonal-1} must blow up exponentially in $|c_B|$.
This is shown by direct estimates of the Mehler kernel.

\subsection*{Notation}
Throughout the article we will work in finite dimension $n \geq 1$.
We will write $L^p(\gamma) = L^p(\R^n,\gamma)$.
Every ball $B \subset \R^n$ is of the form
\begin{equation*}
	B = B(c_B,r_B) = \{x \in \R^n : |x-c_B| < r_B\}
\end{equation*}
for some unique centre $c_B \in \R^n$ and radius $r_B > 0$.
For each ball $B$ and each scalar $\lambda > 0$ we define the expansion $\lambda B = \lambda B(c_B,r_B) := B(c_B,\lambda r_B)$, and we define annuli $(C_k(B))_{k \in \N}$ by
\begin{equation*}
	C_k(B) := \begin{cases} 2B & k = 0, \\ 2^{k+1}B \setminus 2^k B & k \geq 1. \end{cases}
\end{equation*}
For two sets $E,F \subset \R^n$ we write
\begin{equation*}
	\dist(E,F) := \inf\{|x-y| : x \in E, y \in F\}.
\end{equation*}
For two non-negative numbers $A$ and $B$, we write $A \lesssim_{a_1,a_2,\ldots} B$ to mean that $A \leq CB$, where $C$ is a positive constant depending on the quantities $a_1,a_2,\ldots$.
This constant will generally change from line to line. 

\section{A positive result}
The Ornstein--Uhlenbeck semigroup satisfies the following `Davies--Gaffney-type' $L^2(\gamma)$--$L^2(\gamma)$ off-diagonal estimates.
These appear in \cite[Example 6.1]{vNP15}, where they are attributed to Alan McIntosh.
\begin{thm}[McIntosh]\label{thm:mcintosh}
	There exists a constant $C > 0$ such that for all Borel subsets $E,F$ of $\R^n$ and all $u \in L^2(\gamma)$, 
	\begin{equation*}
	\|\mathbf{1}_F \e^{tL} (\mathbf{1}_E u)\|_{L^2(\gamma)}  \leq C \frac{t}{\dist(E,F)} \exp\Bigl(-\frac{\dist(E,F)^2}{2t} \Bigr) \|\mathbf{1}_E u\|_{L^2(\gamma)}.
	\end{equation*}
\end{thm}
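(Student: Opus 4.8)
The plan is to deduce this from the finite speed of propagation of the wave equation associated with $L$, combined with the standard subordination of the Gaussian semigroup to the wave group. First I would set $A := (-2L)^{1/2}$, which is well defined since $-L$ is a non-negative self-adjoint operator on $L^2(\gamma)$; then $-A^2/2 = L$, so $\e^{tL} = \e^{-tA^2/2}$. The Fourier transform of a Gaussian gives the scalar identity $\e^{-t\lambda^2/2} = (2\pi t)^{-1/2}\int_\R \e^{-\tau^2/(2t)}\cos(\tau\lambda)\D\tau$ for every $\lambda\in\R$, so by the spectral theorem
\begin{equation*}
	\e^{tL} = \frac{1}{\sqrt{2\pi t}}\int_\R \e^{-\tau^2/(2t)}\cos(\tau A)\D\tau ,
\end{equation*}
with the integral converging in the strong operator topology because $\|\cos(\tau A)\|_{L^2(\gamma)\to L^2(\gamma)}\leq 1$ for every $\tau$.

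The key step is the claim that $\cos(\tau A)$ propagates at unit speed with respect to the Euclidean distance: if $u$ is supported in a set $E$, then $\cos(\tau A)u$ is supported in $\{x : \dist(x,E)\leq|\tau|\}$. This is the finite-propagation-speed theorem for strongly local Dirichlet forms, applied to $\mathcal E(u) = \int_{\R^n}|\nabla u|^2\D\gamma$, the form of $-2L = \nabla^*\nabla$, whose energy measure is $|\nabla u|^2\D\gamma$ and whose intrinsic distance is therefore exactly the Euclidean one; equivalently, conjugating $-2L$ by the Gaussian density — a support-preserving unitary — turns it into the Hermite operator $-\Delta + |x|^2$ (up to an additive constant), and one may quote the classical result for second-order operators with principal part $-\Delta$. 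Granting this, and writing $d := \dist(E,F)$ (the estimate being trivial when $d = 0$), we have $\mathbf{1}_F\cos(\tau A)(\mathbf{1}_E u) = 0$ whenever $|\tau| < d$, so
\begin{equation*}
	\mathbf{1}_F\e^{tL}(\mathbf{1}_E u) = \frac{1}{\sqrt{2\pi t}}\int_{|\tau|\geq d}\e^{-\tau^2/(2t)}\,\mathbf{1}_F\cos(\tau A)(\mathbf{1}_E u)\D\tau .
\end{equation*}

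Taking $L^2(\gamma)$-norms and using $\|\cos(\tau A)\|\leq 1$ then gives
\begin{equation*}
	\|\mathbf{1}_F\e^{tL}(\mathbf{1}_E u)\|_{L^2(\gamma)} \leq \frac{2}{\sqrt{2\pi t}}\biggl(\int_d^\infty \e^{-\tau^2/(2t)}\D\tau\biggr)\|\mathbf{1}_E u\|_{L^2(\gamma)} ,
\end{equation*}
and a routine bound for the Gaussian tail (for instance using $\tau^2 - d^2 \geq 2d(\tau-d)$ for $\tau \geq d$) completes the argument. I expect the only genuine obstacle to be the justification of the finite propagation speed, since $-2L$ carries the unbounded drift $\langle x,\nabla\rangle$; the point to make is that this drift is of lower order and hence irrelevant to the propagation speed, which depends only on the principal symbol — the rest of the argument is formal.

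A second route worth keeping in reserve, avoiding the wave equation altogether, is Davies' exponential-perturbation method: fix a bounded $1$-Lipschitz function $\psi$, put $w := \e^{\rho\psi}\e^{tL}(\e^{-\rho\psi}u)$, and differentiate $\|w\|_{L^2(\gamma)}^2$ in $t$. The first-order term created by the twist turns out, after one integration by parts, to cancel exactly against the drift term of $L$, leaving $\partial_t\|w\|^2 = -\|\nabla w\|^2 + \rho^2\int|\nabla\psi|^2 w^2\D\gamma \leq \rho^2\|w\|^2$. Gronwall's inequality, the choice $\psi = \min(\dist(\cdot,E),d)$, and optimisation over $\rho > 0$ then give $\|\mathbf{1}_F\e^{tL}(\mathbf{1}_E u)\|_{L^2(\gamma)} \leq \e^{-d^2/(2t)}\|\mathbf{1}_E u\|_{L^2(\gamma)}$ — the same estimate, though without the prefactor $t/\dist(E,F)$.
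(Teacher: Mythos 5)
The paper does not prove Theorem~\ref{thm:mcintosh}; it cites it from \cite[Example 6.1]{vNP15} and attributes it to McIntosh. So any self-contained argument is ipso facto a different route, and yours is a good one: the subordination of $\e^{tL} = \e^{-tA^2/2}$ (with $A = (-2L)^{1/2}$) to the wave group $\cos(\tau A)$ together with unit-speed finite propagation is the classical Sikora/Cowling--Sikora approach to Davies--Gaffney estimates. The finite-propagation claim itself is correct, and arguably the cleanest justification is neither of the two you sketch but the direct energy estimate: because $-2L = \nabla^*\nabla$ is in divergence form with respect to $\gamma$, the quantity $\frac12\int_{B(x_0,R-s)}(|\partial_s u|^2 + |\nabla u|^2)\D\gamma$ is non-increasing along solutions of $\partial_s^2 u = 2Lu$ (Gaussian integration by parts plus Cauchy--Schwarz on the boundary term), and the drift produces no contribution. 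Your conjugation route also works, but note that $U^{-1}(-2L)U = -\Delta + |x|^2 - n$, so you need finite propagation for $\cos(\tau\sqrt{H-n})$ with $H$ the Hermite operator, not $\cos(\tau\sqrt{H})$; this is fine (the same energy argument applies since $|x|^2 - n$ is bounded below), but it is not literally the textbook statement for $-\Delta + V$ with $V \geq 0$, so be careful if you quote it.

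There is, however, a quantitative discrepancy you should not gloss over. Writing $d = \dist(E,F)$, the Gaussian-tail bound $\int_d^\infty \e^{-\tau^2/(2t)}\D\tau \leq \frac{t}{d}\e^{-d^2/(2t)}$ combined with the prefactor $(2\pi t)^{-1/2}$ from the subordination formula yields
\begin{equation*}
	\|\mathbf{1}_F\e^{tL}(\mathbf{1}_E u)\|_{L^2(\gamma)} \leq \sqrt{\tfrac{2}{\pi}}\,\frac{\sqrt{t}}{d}\,\e^{-d^2/(2t)}\,\|\mathbf{1}_E u\|_{L^2(\gamma)},
\end{equation*}
i.e.\ the polynomial prefactor is $\sqrt{t}/d$, not the $t/d$ claimed in the theorem. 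For $t \geq 1$ this is stronger than the statement, but for $t < 1$ it is strictly weaker, and there is no obvious way to recover the extra $\sqrt{t}$ from this subordination (integrating by parts to pass to $\sin(\tau A)/A$ only makes things worse). Your Davies twisted-norm argument produces $\e^{-d^2/(2t)}$ with no prefactor at all, which is also not the stated bound. So both of your routes give the correct exponential decay with the correct constant $1/(2t)$, and either is more than enough for the use made of Theorem~\ref{thm:mcintosh} in the proof of Theorem~\ref{thm:posresult}, but neither reproduces the precise prefactor $t/\dist(E,F)$; if you want to match the statement verbatim you would need a different mechanism (for instance, the resolvent/functional-calculus manipulations one would expect in McIntosh's original argument) or simply report the weaker-for-small-$t$ bound.
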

Furthermore, Nelson \cite{eN66} established the following hypercontractive behaviour of the semigroup.\footnote{This is done only for $n=1$ in this reference, and a full proof for general $n$ is given in Nelson's seminal 1973 paper \cite{eN73}. These papers won him the 1995 Steele prize.} 
\begin{thm}[Nelson]\label{thm:nelson}
	Let $t > 0$ and $p \in (1 + e^{-2t},2]$.
	Then $\e^{tL}$ is a contraction from $L^p(\gamma)$ to $L^2(\gamma)$.
\end{thm}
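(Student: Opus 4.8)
The plan is to prove the sharp assertion that $\e^{tL}$ is a contraction from $L^p(\gamma)$ to $L^{q(t)}(\gamma)$ with $q(t) := 1 + \e^{2t}(p-1)$, following Gross's semigroup argument; since the hypothesis $p > 1 + \e^{-2t}$ forces $q(t) > 2$ and $\gamma$ is a probability measure, the claimed $L^p(\gamma)$--$L^2(\gamma)$ bound follows from $\|v\|_{L^2(\gamma)} \le \|v\|_{L^{q(t)}(\gamma)}$. First I would make three reductions. Because the Mehler kernel is positive we have $|\e^{sL}u| \le \e^{sL}|u|$ pointwise, so it suffices to treat $u \ge 0$; because $\e^{sL}\mathbf 1 = \mathbf 1$, replacing $u$ by $u+\delta$ keeps $\e^{sL}u \ge \delta$ for all $s \ge 0$; and by density it suffices to consider $u$ smooth and bounded, so that $u_s := \e^{sL}u$ is smooth, bounded, and bounded below by $\delta$ uniformly on $[0,t]$ --- enough to legitimise the differentiations and integrations by parts below.

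The analytic engine is the Gaussian logarithmic Sobolev inequality, which in the normalisation attached to $L$ reads
\begin{equation*}
	\mathrm{Ent}_\gamma(g^2) := \int_{\R^n} g^2 \log g^2 \D\gamma - \Bigl(\int_{\R^n} g^2 \D\gamma\Bigr)\log\Bigl(\int_{\R^n} g^2 \D\gamma\Bigr) \le \int_{\R^n} |\nabla g|^2 \D\gamma
\end{equation*}
for sufficiently regular $g$, the constant $1$ matching the factor $\tfrac12$ in $L = -\tfrac12\nabla^*\nabla$. I would treat this as known (it is Gross's inequality); a self-contained derivation would obtain it by tensorising the one-variable case, which in turn follows from the elementary two-point inequality on $\{-1,1\}$ composed with the central limit theorem, or alternatively from the Bakry--Émery curvature computation for the Ornstein--Uhlenbeck generator.

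With these in hand I would run the moving-exponent differentiation. Let $q(s)$ for the moment be any smooth increasing function with $q(0)=p$ and $q(s)>1$, and set $N(s) := \|u_s\|_{L^{q(s)}(\gamma)}$ and $I(s) := \int_{\R^n} u_s^{q(s)}\D\gamma = N(s)^{q(s)}$. Differentiating $\log N(s) = q(s)^{-1}\log I(s)$, using $\partial_s u_s = Lu_s$ together with the self-adjointness identity $\int f\,Lg\D\gamma = -\tfrac12\int\nabla f\cdot\nabla g\D\gamma$, and writing $g := u_s^{q(s)/2}$ so that $u_s^{q(s)-2}|\nabla u_s|^2 = \tfrac{4}{q(s)^2}|\nabla g|^2$, one arrives at
\begin{equation*}
	q(s)^2 I(s)\,\frac{\D}{\D s}\log N(s) = q'(s)\,\mathrm{Ent}_\gamma(g^2) - 2\bigl(q(s)-1\bigr)\int_{\R^n}|\nabla g|^2\D\gamma.
\end{equation*}
Now I choose $q(s)$ to solve $q'(s) = 2\bigl(q(s)-1\bigr)$ with $q(0)=p$, i.e.\ $q(s) = 1 + \e^{2s}(p-1)$; then, multiplying the logarithmic Sobolev inequality by $q'(s) = 2(q(s)-1) > 0$, the right-hand side is at most $\bigl(q'(s) - 2(q(s)-1)\bigr)\int_{\R^n}|\nabla g|^2\D\gamma = 0$. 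Hence $N$ is nonincreasing on $[0,t]$, so $\|\e^{tL}u\|_{L^{q(t)}(\gamma)} = N(t) \le N(0) = \|u\|_{L^p(\gamma)}$, as required.

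The main obstacle is conceptual rather than computational: the whole argument reduces to the logarithmic Sobolev inequality, so a genuinely self-contained proof must establish that, and the real work there is the tensorisation/central-limit passage from the two-point inequality (equivalently, Gross's theorem that hypercontractivity and the logarithmic Sobolev inequality are the same statement). A secondary, purely technical obstacle is justifying the differentiation under the integral sign and the integrations by parts uniformly on $[0,t]$ --- this is exactly why I insisted above on the a priori regularity (smooth, bounded, bounded below) of $u$, after which a routine density argument removes the restriction. One could instead bypass the logarithmic Sobolev inequality and follow Nelson's original route --- reduce to $n=1$ via the multiplicativity of the semigroup under tensor products, then carry out a direct Gaussian-integral estimate (or argue via analytic continuation of the Mehler kernel) --- but the Gross argument is by far the shortest to write down.
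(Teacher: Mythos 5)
The paper does not prove Nelson's theorem; it is stated as a known result with citations to Nelson's 1966 and 1973 papers, where the original proof proceeds by an explicit computation with Gaussian integrals in dimension one (essentially analytic continuation of the Mehler kernel) followed by tensorisation to general $n$. Your proposal instead runs Gross's 1975 argument: prove the sharp $L^p(\gamma)$--$L^{q(t)}(\gamma)$ contraction with $q(t) = 1 + e^{2t}(p-1)$ by differentiating $s \mapsto \|e^{sL}u\|_{L^{q(s)}(\gamma)}$ along the curve $q'(s) = 2(q(s)-1)$, and then feed the Gaussian logarithmic Sobolev inequality into the derivative formula. Your normalisations are consistent with the paper's conventions: with $d\gamma = \pi^{-n/2}e^{-|x|^2}\,dx$ and $L = -\tfrac12\nabla^*\nabla$ the LSI does read $\mathrm{Ent}_\gamma(g^2) \le \int |\nabla g|^2\,d\gamma$, and the moving-exponent identity and its closure under $q'(s) = 2(q(s)-1)$ are computed correctly. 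You also correctly observe that $p > 1 + e^{-2t}$ gives $q(t) > 2$, so the stated $L^p$--$L^2$ contraction follows from $\|\cdot\|_{L^2(\gamma)} \le \|\cdot\|_{L^{q(t)}(\gamma)}$ on a probability space. The reductions to $u$ smooth, bounded, and bounded below are the standard way to legitimise the differentiations. The trade-off between the two routes is the one you name yourself: Nelson's original argument is self-contained but computation-heavy, while Gross's is conceptually cleaner and shorter but outsources the real content to the logarithmic Sobolev inequality, which must then be established by tensorisation from the two-point inequality (or by Bakry--\'Emery). Either way your argument is correct; it simply proves more (the sharp exponent range, up to the open endpoint) than the theorem asks for and then specialises.
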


Note that $p > 1 + e^{-2t}$ if and only if $t > \frac{1}{2} \log \frac{1}{p-1}$.
Thus the hypercontractive behaviour of the Ornstein--Uhlenbeck semigroup is much more delicate than that of the usual heat semigroup $\e^{t\Delta}$ on $\R^n$, which is a contraction from $L^p(\R^n)$ into $L^q(\R^n)$ for all $1 \leq p \leq q \leq \infty$ and all $t > 0$.

As indicated in the proof of \cite[Proposition 3.2]{pA07}, one can interpolate between Theorems \ref{thm:mcintosh} and \ref{thm:nelson} to deduce certain $L^p(\gamma)$-$L^2(\gamma)$ off-diagonal estimates for the Ornstein--Uhlenbeck semigroup.

\begin{thm}\label{thm:posresult}
	Suppose that $E,F$ are Borel subsets of $\R^n$.
	Let $t > 0$ and $p \in (1 + e^{-2t},2]$.
	Then for all $u \in L^p(\gamma)$,
	\begin{equation*}
		\|\mathbf{1}_F \e^{tL} (\mathbf{1}_E u)\|_{L^2(\gamma)} \leq \biggl( \frac{Ct}{\dist(E,F)} \exp\Bigl(-\frac{\dist(E,F)^2}{2t}\Bigr) \biggr)^{1 - \delta(p,t)} \|\mathbf{1}_E u\|_{L^p(\gamma)},
	\end{equation*}
	where $C$ is the constant from Theorem \ref{thm:mcintosh} and where
	\begin{equation*}
		\delta(p,t) := \frac{\frac12 - \frac1p}{\frac12 - \frac{1}{1 + e^{-2t}}} \in [0,1).
	\end{equation*}
\end{thm}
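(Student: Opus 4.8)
The plan is to derive the estimate from a single application of Riesz--Thorin interpolation to the linear operator $T := \mathbf{1}_F \e^{tL} \mathbf{1}_E$ on the finite measure space $(\R^n,\gamma)$, using Theorem~\ref{thm:mcintosh} at one endpoint and Theorem~\ref{thm:nelson} at the other. First I would fix $t$ and $p$ as in the statement, write $d := \dist(E,F)$ and $M := \tfrac{Ct}{d}\exp(-d^2/(2t))$, and record the two endpoint bounds. Theorem~\ref{thm:mcintosh} gives directly $\|T\|_{L^2(\gamma)\to L^2(\gamma)} \leq M$. For the other endpoint set $p_0 := 1 + \e^{-2t}$; since $\|\mathbf{1}_F\|_\infty \leq 1$ the factor $\mathbf{1}_F$ may be discarded, and Theorem~\ref{thm:nelson} then yields $\|\e^{tL} v\|_{L^2(\gamma)} \leq \|v\|_{L^{p_0}(\gamma)}$, hence $\|T\|_{L^{p_0}(\gamma)\to L^2(\gamma)} \leq 1$. (I would apply the resulting operator-norm bounds to the function $\mathbf{1}_E u$ rather than to $u$; since $\mathbf{1}_E\mathbf{1}_E = \mathbf{1}_E$ one has $T u = T(\mathbf{1}_E u)$, and this is exactly what produces the factor $\|\mathbf{1}_E u\|_{L^p(\gamma)}$ on the right-hand side.)

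Interpolating between these bounds, which share the target exponent $2$, gives for every $\theta \in [0,1]$ the estimate $\|T\|_{L^{p_\theta}(\gamma)\to L^2(\gamma)} \leq M^{1-\theta}$, where $\tfrac{1}{p_\theta} = \tfrac{1-\theta}{2} + \tfrac{\theta}{p_0}$. Solving this relation for $\theta$ in terms of $p_\theta$ gives $\theta = \bigl(\tfrac12 - \tfrac1{p_\theta}\bigr)\big/\bigl(\tfrac12 - \tfrac1{p_0}\bigr)$, and with $p_\theta = p$, $p_0 = 1+\e^{-2t}$ this is precisely $\delta(p,t)$; the hypothesis $p \in (1+\e^{-2t},2]$ is exactly what forces $\delta(p,t) \in [0,1)$, so $\theta = \delta(p,t)$ is an admissible interpolation parameter. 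Substituting this choice and then running the bound on $\mathbf{1}_E u$ gives the claimed inequality.

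The one point needing care is that Theorem~\ref{thm:nelson} supplies the $L^{p_0}$--$L^2$ contraction only for exponents strictly above $p_0 = 1+\e^{-2t}$, not at $p_0$ itself, so the endpoint bound $\|T\|_{L^{p_0}\to L^2}\le 1$ is not literally available from the version quoted here. I would handle this by interpolating instead between $L^2$ and $L^{p_1}$ for $p_1 \in (1+\e^{-2t},p]$, obtaining $\|T\|_{L^p(\gamma)\to L^2(\gamma)} \leq M^{1-\theta(p_1)}$ with $\theta(p_1) = \bigl(\tfrac12 - \tfrac1p\bigr)\big/\bigl(\tfrac12 - \tfrac1{p_1}\bigr)$, and then letting $p_1 \downarrow 1+\e^{-2t}$: the left-hand side is independent of $p_1$ while $\theta(p_1) \to \delta(p,t)$ by continuity, so $M^{1-\theta(p_1)} \to M^{1-\delta(p,t)}$ and the bound passes to the limit. (Alternatively, one may simply invoke the sharp form of Nelson's theorem, which does include the endpoint $\e^{2t} = (q-1)/(p-1)$.) Apart from this limiting step the argument is routine: both analytic ingredients are used as black boxes, and no cancellation or kernel estimate is involved.
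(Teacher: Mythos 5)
Your proposal is correct and follows essentially the same route as the paper: interpolate via Riesz--Thorin between the $L^2$--$L^2$ Davies--Gaffney bound of Theorem~\ref{thm:mcintosh} and Nelson's $L^{p_0}$--$L^2$ contraction, then take the limit $p_0 \downarrow 1+\e^{-2t}$ since Nelson's theorem is only quoted for an open interval of exponents. The only cosmetic difference is that you parametrize the interpolation with the roles of the endpoints swapped (your $\theta$ is the paper's $1-\theta(p_0)$), which of course yields the same exponent $1-\delta(p,t)$.
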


\begin{proof}
	Write
	\begin{equation*}
		C_M := \frac{Ct}{\dist(E,F)} \exp\Bigl( \frac{\dist(E,F)^2}{2t} \Bigr).
	\end{equation*}
	Theorem \ref{thm:mcintosh} says that
	\begin{equation*}
		\| e^{tL} \|_{L^2(\gamma,E) \to L^2(\gamma,F)} \leq C_M.
	\end{equation*}
	For all $p_0 \in (1 + e^{-2t},p)$ we have
	\begin{equation*}
		\|e^{tL} \|_{L^{p_0}(\gamma,E) \to L^2(\gamma,F)} \leq \|e^{tL}\|_{L^{p_0}(\gamma) \to L^2(\gamma)} \leq 1
	\end{equation*}
	by Theorem \ref{thm:nelson}.
	Therefore by the Riesz--Thorin theorem we get
	\begin{equation*}
		\|e^{tL} \|_{L^p(\gamma,E) \to L^p(\gamma,F)} \leq C_M^{\theta(p_0)},
	\end{equation*}
	where $p^{-1} = (1-\theta(p_0))/p_0 + \theta(p_0)/2$, or equivalently
	\begin{equation*}
		\theta(p_0) = \frac{\frac1p - \frac{1}{p_0}}{\frac12 - \frac{1}{p_0}} = 1 - \frac{\frac12 - \frac1p}{\frac12 - \frac{1}{p_0}}.
	\end{equation*}
	Taking the limit as $p_0 \to 1 + e^{-2t}$ gives
	\begin{equation*}
		\|e^{tL}\|_{L^p(\gamma,E) \to L^p(\gamma,F)} \leq C_M^{1 - \delta(p,t)}
	\end{equation*}
	and completes the proof.
\end{proof}

\begin{rem}
	For $1 < p < q < \infty$, a $L^p(\gamma)$--$L^q(\gamma)$ version of Theorem \ref{thm:posresult} could be proven by first establishing $L^q(\gamma)$--$L^q(\gamma)$ off-diagonal estimates---which may be obtained by interpolating between boundedness on $L^q(\gamma)$ and the Davies--Gaffney type estimates---and then arguing by the $L^p(\gamma)$--$L^q(\gamma)$ version of Nelson's theorem.
\end{rem}

This positive result does not rule out the possibility of some \emph{restricted} $L^p(\gamma)$--$L^2(\gamma)$ off-diagonal estimates for $p \leq 1 + e^{-2t}$.
In the next section we show one way in which these can fail.

\section{Lower bounds and negative results}
\label{sec:fail}

In this section we show that the $L^p(\gamma)$--$L^q(\gamma)$ off-diagonal estimates of \eqref{eq:off-diagonal-1} are not satisfied for admissible balls and small annuli when $t$ is sufficiently small (depending on $p$ and $q$).
More precisely, we show that \eqref{eq:off-diagonal-1} fails when $E$ is a maximal admissible ball $B$, i.e.\ a ball for which $r_B = \min(1,|c_B|^{-1})$, and $F$ is an annulus $C_k(B)$ with $k$ sufficiently small.
These sets typically appear in applications of off-diagonal estimates.

\begin{thm}\label{thm:mainneg}
	Suppose that $1 \leq p < q < \infty$, and that
	\begin{equation}\label{eqn:tpg-condn}
		\frac{2}{e^t + 1} > 1 - \biggl( \frac{1}{p} - \frac{1}{q} \biggr),
	\end{equation}
	or equivalently that
	\begin{equation*}
		t < \log\Biggl( \frac{1 + (\frac{1}{p} - \frac{1}{q})}{1 - (\frac{1}{p} - \frac{1}{q})} \Biggr).
	\end{equation*}
	Then the off-diagonal estimates \eqref{eq:off-diagonal-1} do not hold for the class of testing sets
	\begin{equation*}
		\{(E,F) : E = B(c_B,|c_B|^{-1}), \, F = C_k(B), \, 2^k \leq |c_B|\}.
	\end{equation*}
\end{thm}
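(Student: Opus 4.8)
The plan is to exhibit, for each small $t$ satisfying \eqref{eqn:tpg-condn}, a sequence of testing pairs $(E,F)$ from the prescribed class along which the ratio
\[
	\frac{\|\mathbf{1}_F \e^{tL}(\mathbf{1}_E f)\|_{L^q(\gamma)}}{\exp(-c\,\dist(E,F)^2/t)\,\|f\|_{L^p(\gamma)}}
\]
blows up, no matter how the constants $c > 0$ and $\theta \geq 0$ in \eqref{eq:off-diagonal-1} are chosen. Since $E$ and $F$ are both bounded sets (once $|c_B|$ is large, $r_B = |c_B|^{-1}$ is small and all the $C_k(B)$ with $2^k \le |c_B|$ stay within a bounded multiple of $r_B$ of $c_B$), the Gaussian factor $\exp(-c\,\dist(E,F)^2/t)$ is bounded below by a positive constant uniformly along the sequence, so it plays no role; what must be shown is that the raw operator norm $\|\e^{tL}\|_{L^p(\gamma,E)\to L^q(\gamma,F)}$ grows without bound as $|c_B| \to \infty$.

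First I would fix the direction: take $c_B = R\omega$ for a fixed unit vector $\omega$ and $R \to \infty$, set $r_B = R^{-1}$, and choose the test function $f = \mathbf{1}_E$ (or a suitable normalised bump on $E$). The next step is to estimate $\e^{tL}f$ from below on $F = C_k(B)$ using the explicit Mehler kernel \eqref{eq:mehler-kernel}. On $E$ and on $F$, both $x$ and $y$ are within $O(r_B) = O(R^{-1})$ of $c_B = R\omega$, so $|x-y|^2 = O(R^{-2})$ is negligible and the first exponential in $M_t(x,y)$ is comparable to $1$; the decisive term is the second exponential, $\exp\bigl(2\e^{-t}\langle x,y\rangle/(1+\e^{-t})\bigr)$, where $\langle x,y\rangle \approx |c_B|^2 = R^2$. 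Thus on these sets $M_t(x,y) \gtrsim_{t} \exp\bigl(\tfrac{2\e^{-t}}{1+\e^{-t}} R^2 (1 - O(R^{-1}))\bigr)$, and integrating against $d\gamma(y) = \pi^{-n/2}\e^{-|y|^2}\,dy$ over $E$ (where $|y|^2 \approx R^2$) gives
\[
	\e^{tL}f(x) \gtrsim_{n,t} \gamma(E)\,\exp\Bigl( \tfrac{2\e^{-t}}{1+\e^{-t}} R^2 - R^2 + o(R^2) \Bigr)
	= \gamma(E)\,\exp\Bigl( -\tfrac{1 - \e^{-t}}{1 + \e^{-t}} R^2 + o(R^2) \Bigr)
\]
for $x \in F$, since $\tfrac{2\e^{-t}}{1+\e^{-t}} - 1 = -\tfrac{1-\e^{-t}}{1+\e^{-t}} = \tfrac{2}{\e^{t}+1} - 1$. (Here I should track the $\langle x,y\rangle$ cross term carefully, writing $x = c_B + \xi$, $y = c_B + \eta$ with $|\xi|,|\eta| \lesssim R^{-1}$, so that $\langle x,y\rangle = R^2 + R\langle\omega,\xi+\eta\rangle + \langle\xi,\eta\rangle = R^2 + O(1)$, and similarly $|y|^2 = R^2 + O(1)$; the $O(1)$ errors only cost a $t$-dependent multiplicative constant.)

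Then I would assemble the two sides. On the left, $\|\mathbf{1}_F \e^{tL} f\|_{L^q(\gamma)} \gtrsim \gamma(F)^{1/q}\gamma(E)\exp\bigl((\tfrac{2}{\e^t+1}-1)R^2 + o(R^2)\bigr)$; on the right, $\|f\|_{L^p(\gamma)} = \gamma(E)^{1/p}$. Using the standard two-sided estimate $\gamma(B(c_B, r)) \asymp_n r^n \e^{-|c_B|^2}$ for admissible balls (valid since $r_B|c_B| \le 1$), one has $\gamma(E) \asymp R^{-n}\e^{-R^2}$ and $\gamma(F) \asymp R^{-n}\e^{-R^2}$ up to constants depending on $n$ and $k$ only. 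Hence the ratio is bounded below by a constant times
\[
	\exp\Bigl( \bigl(\tfrac{2}{\e^t+1} - 1\bigr) R^2 + \bigl(1 - \tfrac1p\bigr) R^2 - \tfrac1q R^2 + o(R^2) \Bigr)
	= \exp\Bigl( \Bigl( \tfrac{2}{\e^t+1} - 1 + \tfrac1p - \tfrac1q \Bigr) R^2 + o(R^2) \Bigr),
\]
and the exponent's leading coefficient $\tfrac{2}{\e^t+1} - \bigl(1 - (\tfrac1p - \tfrac1q)\bigr)$ is strictly positive precisely under hypothesis \eqref{eqn:tpg-condn}. Therefore the ratio tends to $+\infty$ as $R \to \infty$, contradicting \eqref{eq:off-diagonal-1} for any $c$ and $\theta$ (the $t^{-\theta}$ factor is a fixed constant here). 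I expect the main obstacle to be purely bookkeeping: pinning down the exact exponential rate with the right constant $\tfrac{2}{\e^t+1}-1$, i.e.\ carefully controlling all the cross terms $\langle x,y\rangle$, $|y|^2$, $|x-y|^2$ over $E$ and over each annulus $C_k(B)$, and verifying that the Gaussian measures of $E$ and $F$ behave like $R^{-n}\e^{-R^2}$ with constants uniform in $R$; the choice $2^k \le |c_B|$ is exactly what keeps $F$ inside the region where these estimates are uniform. Once the rate is identified, the failure is immediate from the sign of the exponent.
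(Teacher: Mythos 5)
Your proposal takes essentially the same route as the paper's proof: lower-bound the Mehler kernel on $B \times C_k(B)$ via $\langle x,y\rangle = |c_B|^2 + O_k(1)$, integrate over $E=B$ to bound $e^{tL}\mathbf{1}_B$ from below on the annulus, and then compare exponential-in-$|c_B|^2$ growth rates against $\|f\|_{L^p(\gamma)}$ to force a contradiction with \eqref{eq:off-diagonal-1}, noting as you do that $\dist(E,F)\lesssim 1$ and $t$ is fixed so the Gaussian and $t^{-\theta}$ factors are harmless. The paper simply packages the kernel computation into Lemma~\ref{lem:main} before deriving the contradiction.

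There is, however, a bookkeeping slip worth flagging. Your displayed bound
$e^{tL}f(x) \gtrsim_{n,t} \gamma(E)\exp\bigl(\tfrac{2e^{-t}}{1+e^{-t}}R^2 - R^2 + o(R^2)\bigr)$
double-counts the Gaussian decay on $E$: the factor $e^{-R^2}$ you pulled out of $d\gamma(y)$ is already carried by $\gamma(E)\asymp R^{-n}e^{-R^2}$. The correct statement is $e^{tL}f(x) \gtrsim_{n,t} \gamma(E)\exp\bigl(\tfrac{2}{e^t+1}R^2 + O_k(1)\bigr)$, equivalently $|E|\exp\bigl((\tfrac{2}{e^t+1}-1)R^2 + O_k(1)\bigr)$. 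In the assembly you then write the exponent as $(\tfrac{2}{e^t+1}-1) + (1-\tfrac1p) - \tfrac1q$, but the factor $\gamma(E)^{1-1/p}$ in the ratio contributes $-(1-\tfrac1p)R^2$, not $+(1-\tfrac1p)R^2$, and moreover $(\tfrac{2}{e^t+1}-1) + (1-\tfrac1p) - \tfrac1q = \tfrac{2}{e^t+1} - \tfrac1p - \tfrac1q$, which is not the claimed $\tfrac{2}{e^t+1}-1+\tfrac1p-\tfrac1q$. These slips happen to cancel, and your final exponent $\tfrac{2}{e^t+1}-1+\tfrac1p-\tfrac1q$ is indeed the correct one (matching the paper), so the conclusion stands once the intermediate displays are repaired.
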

Note that $\frac{1}{p} - \frac{1}{q} \in (0,1)$, so we always obtain some range of $t$ for which the off-diagonal estimates \eqref{eq:off-diagonal-1} fail.

Let us compare Theorems \ref{thm:mainneg} and \ref{thm:posresult}.
Having fixed $p \in (1,2)$, we get failure of $L^p(\gamma)$--$L^2(\gamma)$ off-diagonal estimates for maximal admissible balls and small annuli for $e^{tL}$ when $t < \log\left( \frac{1 + (\frac{1}{p} - \frac{1}{2})}{1 - (\frac{1}{p} - \frac{1}{2})} \right)$, and when $t > \frac{1}{2} \log \frac{1}{p-1}$ the off-diagonal estimates hold for all Borel sets.
We do not know what happens for the remaining values of $t$.

To prove Theorem \ref{thm:mainneg} we rely on the following lower bound.
\begin{lemma}\label{lem:main}
	Suppose $k \geq 1$ is a natural number, $1 < q < \infty$, and let $B$ be a maximal admissible ball with $|c_B| \geq 2^k$.
	Then
	\begin{equation*}
		\biggl( \int_{C_k(B)} | (\e^{tL} \mathbf{1}_B)(y) |^q \, \D\gamma(y) \biggr)^{1/q}
		\gtrsim_{k,n,t} |c_B|^{-n(1+\frac{1}{q})} \exp\biggl( |c_B|^2\biggl( \frac{2}{e^t + 1} - 1 - \frac{1}{q}\biggr) \biggr).
	\end{equation*}
\end{lemma}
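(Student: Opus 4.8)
The plan is to bound the integral over $C_k(B)$ from below by restricting attention to a suitable subregion where the Mehler kernel $M_t(x,y)$ is easy to control from below, and then estimate $(e^{tL}\mathbf{1}_B)(y)$ pointwise there. Since $|c_B|\geq 2^k\geq 2$, the maximal admissible ball $B$ has radius $r_B = |c_B|^{-1}$, so $B$ is a small ball concentrated near the point $c_B$, and $\gamma(B)$ is comparable (up to dimensional and $k$-dependent constants absorbed by the hypotheses) to $|c_B|^{-n}$ times the Gaussian density $\pi^{-n/2}e^{-|c_B|^2}$ evaluated near $c_B$. For $y \in C_k(B)$ we have $|y - c_B| \sim 2^k r_B = 2^k|c_B|^{-1} \leq 1$, so $y$ lies at bounded Euclidean distance from $c_B$, and in particular $|y|$ is comparable to $|c_B|$ and $\langle y, c_B\rangle$ is comparable to $|c_B|^2$.

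First I would write
\begin{equation*}
	(e^{tL}\mathbf{1}_B)(y) = \int_B M_t(y,x)\,\D\gamma(x) = \pi^{-n/2}\int_B \frac{1}{(1-e^{-2t})^{n/2}}\exp\biggl(-e^{-t}\frac{|y-x|^2}{1-e^{-2t}}\biggr)\exp\biggl(\frac{2e^{-t}\langle y,x\rangle}{1+e^{-t}}\biggr)e^{-|x|^2}\,\D x,
\end{equation*}
and then estimate each factor from below uniformly for $x \in B$. For such $x$ we have $|x - c_B| < |c_B|^{-1} \leq 1$, so $|y-x| \leq |y - c_B| + |x - c_B| \lesssim 2^k|c_B|^{-1} + |c_B|^{-1} \lesssim_k 1$; hence the first exponential is bounded below by a constant depending only on $k$ and $t$. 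The term $e^{-|x|^2}$ is bounded below by $e^{-|c_B|^2}e^{-C}$ for a constant $C$ depending on $k$ (using $|x|^2 = |c_B|^2 + 2\langle c_B, x - c_B\rangle + |x-c_B|^2 \leq |c_B|^2 + 2 + 1$). The crucial factor is $\exp(2e^{-t}\langle y,x\rangle/(1+e^{-t}))$: writing $\langle y, x\rangle = \langle c_B, c_B\rangle + (\text{lower order, }\lesssim_k |c_B|)$, this is bounded below by $\exp\bigl(\tfrac{2e^{-t}}{1+e^{-t}}|c_B|^2 - C_k|c_B|\bigr) = \exp\bigl(\tfrac{2}{e^t+1}|c_B|^2 - C_k|c_B|\bigr)$. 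Multiplying through and integrating over $B$ (which contributes a factor $|B| \sim_n |c_B|^{-n}$), I obtain
\begin{equation*}
	(e^{tL}\mathbf{1}_B)(y) \gtrsim_{k,n,t} |c_B|^{-n}\exp\Bigl(-|c_B|^2 + \tfrac{2}{e^t+1}|c_B|^2 - C_k|c_B|\Bigr) = |c_B|^{-n}\exp\Bigl(|c_B|^2\bigl(\tfrac{2}{e^t+1} - 1\bigr) - C_k|c_B|\Bigr),
\end{equation*}
uniformly in $y \in C_k(B)$.

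Then I would raise this to the $q$-th power and integrate the constant lower bound over $C_k(B)$ against $\gamma$. For $y \in C_k(B)$ we have $\pi^{-n/2}e^{-|y|^2} \gtrsim_{k} e^{-|c_B|^2}$ by the same quadratic expansion as above, and the Lebesgue measure of $C_k(B)$ is $\sim_n (2^k|c_B|^{-1})^n \sim_{k,n} |c_B|^{-n}$, so $\gamma(C_k(B)) \gtrsim_{k,n} |c_B|^{-n}e^{-|c_B|^2}$. Combining,
\begin{equation*}
	\biggl(\int_{C_k(B)}|(e^{tL}\mathbf{1}_B)(y)|^q\,\D\gamma(y)\biggr)^{1/q} \gtrsim_{k,n,t} \Bigl(|c_B|^{-n}e^{-|c_B|^2}\Bigr)^{1/q} \cdot |c_B|^{-n}\exp\Bigl(|c_B|^2\bigl(\tfrac{2}{e^t+1}-1\bigr) - C_k|c_B|\Bigr),
\end{equation*}
and gathering the exponential terms gives exactly $|c_B|^{-n(1+1/q)}\exp\bigl(|c_B|^2(\tfrac{2}{e^t+1}-1-\tfrac1q)\bigr)$, modulo the harmless factor $\exp(-\tfrac{1}{q}C_k|c_B|/q)$... actually the linear-in-$|c_B|$ term $e^{-C_k|c_B|}$ must be absorbed: since it is subexponential relative to the $|c_B|^2$ terms one cannot literally absorb it into a constant, so the cleanest route is to note that the claimed inequality is only asserted up to a constant depending on $k,n,t$ and to observe that $e^{-C_k|c_B|}$ can be bounded below by $C_{k,n,t}'\,\exp(-\varepsilon|c_B|^2)$ only for $|c_B|$ bounded — hence instead I will keep track of the linear term and note that the statement as used in Theorem~\ref{thm:mainneg} needs only the $|c_B|^2$ exponential rate, so I would restate the lemma's lower bound with the understanding that polynomial and $e^{-C|c_B|}$ factors are dominated; more carefully, for the application it suffices to prove the bound with an extra factor $e^{-C_{k,n,t}|c_B|}$ on the right, which does not affect the exponential blow-up argument.

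\textbf{Main obstacle.} The delicate point is the bookkeeping of the cross term $\langle y, x\rangle$ and the quadratic terms $|x|^2$, $|y|^2$: one must verify that all the "error" contributions are genuinely of lower order than $|c_B|^2$ (i.e. $O_k(|c_B|)$ or $O_k(1)$), so that the coefficient $\tfrac{2}{e^t+1} - 1 - \tfrac1q$ of $|c_B|^2$ in the final exponent is exactly right and is not contaminated. The sign of this coefficient — positive precisely under hypothesis \eqref{eqn:tpg-condn} — is what drives the counterexample, so the estimate must be tight at the level of the $|c_B|^2$ term while being allowed to be lossy at every lower order. A secondary nuisance is that the linear-in-$|c_B|$ factor $e^{-C_k|c_B|}$ cannot be absorbed into a constant, so the lemma should be read (and will be applied) with that caveat in mind.
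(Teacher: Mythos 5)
Your overall strategy — bound $M_t(x,y)$ from below pointwise for $x\in B$, $y\in C_k(B)$, integrate over $B$, then integrate the resulting constant lower bound over $C_k(B)$ — is exactly the paper's approach. However, there is a genuine error in the bookkeeping of the cross term, and the circumventing argument you build around it (weakening the lemma to allow an extra factor $e^{-C_k|c_B|}$) is not needed once the estimate is done sharply.

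You write $\langle y,x\rangle = |c_B|^2 + O_k(|c_B|)$. This is not sharp, and it is precisely here that the admissibility $r_B = |c_B|^{-1}$ must be used. Expanding
\begin{equation*}
\langle x, y\rangle = |c_B|^2 + \langle c_B, y - c_B\rangle + \langle x - c_B, c_B\rangle + \langle x - c_B, y - c_B\rangle,
\end{equation*}
the three error terms are bounded by $|c_B|\cdot 2^{k+1} r_B = 2^{k+1}$, by $|c_B|\cdot r_B = 1$, and by $2^{k+1} r_B^2 \leq 2^{k+1}$ respectively, so in fact $\langle x,y\rangle = |c_B|^2 + O_k(1)$. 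The same cancellation $|c_B|\cdot r_B = 1$ also makes the errors in $|x|^2$ and $|y|^2$ into $O_k(1)$ (as you correctly note for $|x|^2$). Consequently the offending factor is $e^{-O_k(1)}$, a genuine constant absorbable into $\gtrsim_{k,n,t}$, not $e^{-C_k|c_B|}$. With this correction the lemma holds exactly as stated, and the final paragraph of your proposal — restating the lemma with an extra $e^{-C_{k,n,t}|c_B|}$ and appealing to the needs of the application — should be deleted; as written it does not prove the stated lemma, and the detour is unnecessary.

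One small additional remark: the Lebesgue-measure and $\gamma$-measure estimates for $B$ and $C_k(B)$ that you sketch are correct and match the paper's, so the rest of the argument goes through once the cross-term estimate is tightened.
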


\begin{proof}[Proof of Lemma \ref{lem:main}]
Suppose $x \in B$ and $y \in C_j(B)$.
We argue by computing a lower bound for the Mehler kernel $M_t(x,y)$ as given in \eqref{eq:mehler-kernel}.

First we focus on the factor involving the inner product $\langle x, y \rangle$, where $x = (x_1, x_2, \ldots, x_n)$ and $y = (y_1, y_2, \ldots, y_n)$. 
By symmetry we may assume that $c_B = |c_B| e_1$.
Using $r_B = |c_B|^{-1}$, we get that
\begin{equation*}
 x_1 y_1 \geq (|c_B| - r_B)(|c_B| - 2^{k+1} r_B) \geq |c_B|^2 + O(1),
\end{equation*}
where we use the big-O notation $O(1)$ to mean that $x_1 y_1 - |c_B|^2$ is bounded as $|c_B| \to \infty$. 
If $n \geq 2$, then by using $x_i y_i = O(1)$ for $i \geq 2$ we get that
\begin{equation*}
	\langle x, y \rangle \geq |c_B|^2 + O(1);
\end{equation*}
evidently this estimate remains true when $n=1$.

Using the Mehler kernel representation of $e^{tL}$, for all $y \in C_k(B)$ we thus have
\begin{align*}
	e^{tL} \mathbf{1}_B(y)
	&\gtrsim_{n,t} \int_B \exp\biggl(-e^{-t} \frac{|x-y|^2}{1-e^{-2t}} \biggr) \exp\biggl( \frac{2|c_B|^2}{e^t + 1} \biggr) \D\gamma(x).
\end{align*}
Since $|x-y| < 2^{k+1} r_B \leq 2$, using $r_B = |c_B|^{-1} \leq 2^{-k}$, this gives
\begin{align}
	e^{tL} \mathbf{1}_B (y)
	&\gtrsim_{n,t} \exp\biggl( \frac{2|c_B|^2}{e^t + 1} \biggr) \gamma(B) \nonumber\\
	&\gtrsim_n |c_B|^{-n} \exp\biggl( \frac{2|c_B|^2}{e^t + 1}  - (|c_B| + |c_B|^{-1})^2 \biggr) \nonumber\\
	&\simeq |c_B|^{-n} \exp\biggl( |c_B|^2 \biggl( \frac{2}{e^t + 1} - 1 \biggr) \biggr) \label{eqn:e-est}
\end{align}
using a straightforward estimate on $\gamma(B)$.
Next, we estimate
\begin{align*}
	\gamma(C_k(B)) &\gtrsim_n |C_k(B)| e^{-(|c_B| + 2^{k+1} r_B)^2} \\
	&\simeq_{n} 2^{kn} r_B^n \exp\biggl( -(|c_B|^2 + 2^{k+2}|c_B|r_B + 2^{k+2}r_B^2) \biggr) \\
	&\simeq_{k,n} |c_B|^{-n} \e^{-|c_B|^2}.
\end{align*}
Combining this with \eqref{eqn:e-est} gives
\begin{align*}
	\biggl( \int_{C_k(B)} |(e^{tL} \mathbf{1}_B(y))|^q \D\gamma(y) \biggr)^{1/q}
	&\gtrsim_{n,t} |c_B|^{-n} \exp\biggl( |c_B|^2 \biggl( \frac{2}{e^t + 1} - 1 \biggr) \biggr)  \gamma(C_k(B))^{1/q} \\
	&\gtrsim_{k,n} |c_B|^{-n(1+\frac{1}{q})} \exp\biggl( |c_B|^2\biggl( \frac{2}{e^t + 1} - 1 - \frac{1}{q}\biggr) \biggr),
\end{align*}
as claimed.
\end{proof}

\begin{proof}[Proof of Theorem \ref{thm:mainneg}]
	We argue by contradiction.
	Suppose that $e^{tL}$ satisfies the $L^p(\gamma)$--$L^q(\gamma)$ off-diagonal estimates \eqref{eq:off-diagonal-1} for some $\theta \geq 0$, and for $(E,F)$ as stated.
	Fix a natural number $k \geq 1$ and let $B$ be a maximal admissible ball with $|c_B| > 2^k$.
	Lemma \ref{lem:main} and the off-diagonal estimates for $E = B$, $F = C_k(B)$, and $f = \mathbf{1}_B$ then imply
	\begin{align*}
		|c_B|^{-n(1+\frac{1}{q})} &\exp\biggl( |c_B|^2\biggl( \frac{2}{e^t + 1} - 1 - \frac{1}{q}\biggr)  \biggr) \\
		&\lesssim_{k,n,t,\theta} \exp\biggl(-c \frac{(2^{k+1}-1)^2 r_B^2}{t} \biggr) \gamma(B)^{1/p} \\
		&\simeq \gamma(B)^{1/p}
	\end{align*}
	for some $c > 0$.
	Since
	\begin{equation*}
		\gamma(B)^{1/p} \lesssim_n |B|^{1/p} e^{-\frac{1}{p}(|c_B| - r_B)^2} \simeq_n |c_B|^{-n/p} \exp\biggl(-\frac{|c_B|^2}{p}\biggr),
	\end{equation*}
	this implies
	\begin{align*}
		\exp\biggl( |c_B|^2\biggl( \frac{2}{e^t + 1} - 1 + \frac{1}{p} - \frac{1}{q} \biggr)  \biggr) \lesssim_{k,n,t,\theta} |c_B|^{n(1 - (\frac{1}{p} - \frac{1}{q}))}.
	\end{align*}
	The left hand side grows exponentially in $|c_B|$ when \eqref{eqn:tpg-condn} is satisfied.
	However, the right hand side only grows polynomially in $|c_B|$.
	Thus we have a contradiction.
\end{proof}

\begin{rem}
	By the same argument we can prove failure of $L^p(\gamma)$--$L^q(\gamma)$ off-diagonal estimates for the derivatives $(L^m e^{tL})_{m \in \N}$ of the Ornstein--Uhlenbeck semigroup, with the same conditions on $(p,q,t)$ and the same class of testing sets $(E,F)$.
	This relies on an identification of the kernel of $L^m e^{tL}$, which has been done by the second author \cite{jT16}.
\end{rem}

In this article we only considered off-diagonal estimates with respect to the Gaussian measure $\gamma$.
In future work it would be very interesting to consider appropriate weighted measures, following in particular \cite{BBG12} and \cite{BBGM12}, in which (among many other things) it is shown that estimates of the form $\| e^{tL} f\|_{L^2(\gamma)} \lesssim \|f V_t\|_{L^1(\gamma)}$ hold, where $V_t$ is a certain weight depending on $t$.
Thus the Ornstein--Uhlenbeck semigroup does satisfy a form of `ultracontractivity', but with the caveat that one must keep track of $t$-dependent weights.
It seems that this has not yet been explored in the context of Gaussian harmonic analysis.

\acks The authors thank Mikko Kemppainen, Jan van Neerven, and Pierre Portal for valuable discussions and encouragement on this topic.
We also thank an anonymous referee for their suggested simplification of the proof of Lemma \ref{lem:main}.

\bibliographystyle{srtnumbered}
\bibliography{OU}

\end{document}